\newtheorem{theorem}{Theorem}[section]
\newtheorem{lema}[theorem]{Lemma}
\newtheorem{remark}[theorem]{Remark}
\newtheorem{thm}{Theorem}
\def\meti{\left\langle}
\def\metd{\right\rangle}
\begin{document}

\title[Codimension reduction in symmetric spaces]
{Codimension reduction in symmetric spaces}

\author[A. J. Di Scala]{Antonio J. Di Scala}
\author[F. Vittone]{Francisco Vittone}

\subjclass{Primary 53C29, 53C40}
\keywords{Codimension reduction, Erbacher Theorem, Parallel first normal space, Totally geodesic submanifolds}
\thanks{This work was partially supported by ERASMUS MUNDUS ACTION 2 programme, through the EUROTANGO II Research Fellowship. The second author would like to thanks Politecnico di Torino for the hospitality during his research stay. }

\date{\today}

\maketitle

\begin{abstract}
In this paper we give a short geometric proof of a generalization of a well-known result about reduction of codimension for submanifolds of Riemannian symmetric spaces.
\end{abstract}

\section{Introduction}
The goal of this paper is to give a short geometric proof of the following generalization of the reduction of codimension theorem for submanifolds of space forms \cite[page 339]{Er}:

\begin{thm} Let $M$ be a submanifold of a symmetric space $\mathbb{S}$ and let $\nu(M)$ be its normal bundle. Assume that there exists a $\nabla^{\perp}$-parallel subbundle $\mathrm{V} \subset \nu(M)$ containing the first normal space, i.e. $\mathrm{N}^1 \subset \mathrm{V}$ where $\mathrm{N}^1 = \alpha(TM \times TM)$. If $TM \oplus \mathrm{V}$ is invariant by the curvature tensor of  $\mathbb{S}$ then there exists a totally geodesic submanifold of $\mathbb{S}$ of dimension equal to $\mathrm{rank}(TM \oplus \mathrm{V})$ containing $M$.
\label{teorema}
\end{thm}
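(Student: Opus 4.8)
The plan is to realize $TM$ and $\mathrm{V}$ together as the tangent spaces of a totally geodesic submanifold, using the classical correspondence (due to Cartan) between totally geodesic submanifolds of a symmetric space and curvature-invariant subspaces of the tangent space (Lie triple systems). First I would show that the bundle $\mathrm{W} := TM \oplus \mathrm{V}$ is \emph{parallel along $M$} with respect to the Levi-Civita connection $\widetilde{\nabla}$ of $\mathbb{S}$, meaning $\widetilde{\nabla}_X \sigma \in \Gamma(\mathrm{W})$ whenever $X \in \Gamma(TM)$ and $\sigma \in \Gamma(\mathrm{W})$. This is the routine step: by Gauss' formula $\widetilde{\nabla}_X Y = \nabla_X Y + \alpha(X,Y)$, where $\nabla_X Y \in TM$ and $\alpha(X,Y) \in \mathrm{N}^1 \subset \mathrm{V}$; and by Weingarten's formula $\widetilde{\nabla}_X \xi = -A_\xi X + \nabla^{\perp}_X \xi$ for $\xi \in \Gamma(\mathrm{V})$, where $A_\xi X \in TM$ and $\nabla^{\perp}_X \xi \in \mathrm{V}$ precisely because $\mathrm{V}$ is $\nabla^{\perp}$-parallel. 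Both summands land in $\mathrm{W}$, so $\mathrm{W}$ is parallel along $M$.

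Next, fix a point $p \in M$. The hypothesis that $TM \oplus \mathrm{V}$ is invariant by the curvature tensor says exactly that $\mathrm{W}_p \subset T_p\mathbb{S}$ satisfies $R(\mathrm{W}_p,\mathrm{W}_p)\mathrm{W}_p \subset \mathrm{W}_p$, i.e. $\mathrm{W}_p$ is a Lie triple system. Cartan's theorem then produces a totally geodesic submanifold $N := \exp_p(\mathrm{W}_p)$ of $\mathbb{S}$ with $T_pN = \mathrm{W}_p$, and since $\exp_p$ restricts to a local diffeomorphism on $\mathrm{W}_p$ near the origin, $\dim N = \operatorname{rank}(\mathrm{W}) = \operatorname{rank}(TM \oplus \mathrm{V})$, which is the dimension claimed in the statement.

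The decisive step, and the one I expect to be the genuine obstacle, is to prove $M \subset N$. I would compare parallel transport along curves in $M$ with the tangent distribution of $N$. Let $\gamma:[0,1]\to M$ be a curve with $\gamma(0)=p$, and let $P_t$ denote $\widetilde{\nabla}$-parallel transport along $\gamma$ in $\mathbb{S}$. Parallelism of $\mathrm{W}$ gives $P_t(\mathrm{W}_p)=\mathrm{W}_{\gamma(t)}$, and since $\dot\gamma(t)\in T_{\gamma(t)}M \subset \mathrm{W}_{\gamma(t)}$ we obtain $P_t^{-1}\dot\gamma(t)\in \mathrm{W}_p$ for every $t$; that is, the Cartan anti-development of $\gamma$ into $T_p\mathbb{S}$ stays inside the subspace $\mathrm{W}_p$. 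Because $N$ is totally geodesic, parallel transport in $\mathbb{S}$ along curves of $N$ preserves $TN$, so anti-development yields a correspondence between curves issuing from $p$ in $N$ and curves issuing from $0$ in $\mathrm{W}_p$. Invoking the converse of this correspondence forces $\gamma([0,1]) \subset N$, and connectedness of $M$ then gives $M \subset N$.

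The point that must be justified with care is exactly this converse: that confinement of the anti-development to the Lie triple system $\mathrm{W}_p$ forces the curve back into $N = \exp_p(\mathrm{W}_p)$. This is where the symmetric-space structure is essential — completeness together with $\widetilde{\nabla}R = 0$ guarantees both that parallel transport preserves the Lie-triple-system condition and that the development map behaves well enough for the reconstruction to hold — and it is the step whose rigorous treatment I would expect to occupy the core of the argument.
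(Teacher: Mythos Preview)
Your argument is correct and the development step can indeed be made rigorous: since the complete totally geodesic $N$ given by Cartan's theorem has the property that $\widetilde\nabla$-parallel transport along curves in $N$ restricts to the intrinsic parallel transport of $N$, the development maps of $N$ and of $\mathbb{S}$ agree on curves in $T_pN=\mathrm{W}_p$, and uniqueness of development then forces $\gamma\subset N$.

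Your route, however, is genuinely different from the paper's. The paper does \emph{not} invoke Cartan's Lie-triple-system theorem as a black box; instead it constructs the candidate submanifold directly as $N=\exp^{\perp}(\mathrm{V}_0)$, the image of a tube of $\mathrm{V}$ under the normal exponential map, so that $M\subset N$ is automatic. The work is then to show that this $N$ is totally geodesic, which is done by proving that $\widetilde\nabla$-parallel transport along curves in $N$ preserves $TN$: first along curves in $M$ (your same Gauss--Weingarten computation), then along normal geodesics via Jacobi fields and curvature invariance, and finally along arbitrary loops via a holonomy/curvature-integral lemma (an adaptation of Olmos' proof of Cartan's theorem). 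In effect the paper \emph{reproves} Cartan's theorem in situ. Your approach is shorter and more conceptual once Cartan's theorem is granted, and isolates cleanly where each hypothesis is used; the paper's approach is more self-contained and makes transparent why the result still holds for embedded submanifolds of merely locally symmetric spaces (where a global Cartan theorem and global development are unavailable).
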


As a corollary of this result one can obtain several well-known special cases \cite{Ce},  \cite{CHL}, \cite{Ka}, \cite{Ka2}, \cite{KiPa}, \cite{KwPa}, \cite{Ok} .

The hypothesis about the curvature invariance of $TM \oplus \mathrm{V}$ is redundant if $\mathbb{S}$ is a space form. We will give an example showing that such condition can not be omitted in general, see Section \ref{FR}.

Our proof of the above theorem was mainly inspired by the proof, due to C. Olmos,
of the existence theorem of a totally geodesic submanifold with prescribed tangent space usually attributed to E. Cartan, see \cite[Theorem 8.3.1., page 231]{BCO}.
Olmos' proof is based in Lemma 8.3.2 in  \cite[page 232]{BCO}. We will need to use a slightly different version of this lemma which involve parallel translation along piece-wise smooth curves instead of smooth curves. We include it in an appendix, with a sketch of its proof, for the sake of completeness.

Theorem \ref{teorema} does not hold for submanifolds of locally symmetric spaces. The problem is that under the same hypothesis,  the ``totally geodesic submanifold" containing $M$ may intersect itself, see example in Section \ref{FR}. One can prove a slightly different version of this theorem for locally symmetric spaces by either assuming that the submanifold $M$ is embedded or allowing totally geodesic immersions (not necessarily $1-1$) instead of totally geodesic submanifolds.

Finally, we want to point out that Theorem \ref{teorema} can be obtained, besides our proof, by following two other different approaches.
The first one makes use of the Grassmann bundle theory and the integration theory of differentiable distributions, see \cite[Prop. 3, page 90]{JeRe}.
The second one is based on a generalization of the classical theorem of existence and uniqueness of isometric immersions into space forms, see \cite{EsTr}.
\section{Basic definitions}

We will say that a Riemannian manifold $M$ is a submanifold of a Riemannian manifold $\mathbb{S}$ if there is a $1-1$ isometric immersion $f:M\to \mathbb{S}$. In order to simplify the notation, we shall assume that $M$ is a subset of $\mathbb{S}$, eventually endowed with a different topology,  and $f$ is the inclusion map. If in addition $M$ has the induced topology from $\mathbb{S}$ we say  that $M$ is an embedded submanifold.

We identify the tangent space to $M$ at a point $p$ with a subspace of $T_p\mathbb{S}$ and consider the orthogonal splitting $T_p \mathbb{S}=T_pM\oplus \nu_p M$. Here $\nu_pM$ is the normal space and $\nu (M)$ will denote the normal bundle of $M$.

We denote by $\overline{\nabla}$ the Levi-Civita connection of $\mathbb{S}$ and by  $\nabla$  and $\nabla^{\bot}$ the Levi-Civita and the normal connections of $M$ respectively.  Let $\alpha$ and $A$ be the second fundamental form and shape operator of $M$ respectively. They are defined taking tangent and normal components by the Gauss and Codazzi formulas
\begin{equation}
\overline{\nabla}_X Y=\nabla_X Y+ \alpha(X,Y),\qquad \overline{\nabla}_X\xi=-A_{\xi}X+\nabla^{\bot}_X\xi
\label{gausscodazzi}
\end{equation}
and related by $\meti\alpha(X,Y),\xi\metd=\meti A_{\xi}X,Y\metd$, for any tangent vector fields $X$ and $Y$ to $M$ and any normal vector field $\xi$.


\section{Proof of Theorem \ref{teorema}}


First notice that it suffices to prove the theorem locally around each point. Namely, to show the existence of a totally geodesic submanifold $N_p$ of $\mathbb{S}$ containing a neighbourhood $U$ of $p$ in $M$ whose tangent space is $T_q N_p=T_q M\oplus \mathrm{V}_q $ for all $q\in U$.  Indeed, the global result follows since a complete totally geodesic submanifold of a symmetric space $\mathbb{S}$ with a prescribed tangent space is unique as a global object \cite[Lemma 2, page 235]{KoNo}.

So we may assume that $M$ is small enough so that the normal exponential  map $\exp^{\bot}:\mathrm{V}_0\to \mathbb{S}$ is an immersion from a small neighbourhood $\mathrm{V}_0$ of the zero section of $\mathrm{V}$.

Set $N=\exp^{\bot}(\mathrm{V}_0)$.  Since $M$ is the image of the zero section of $\mathrm{V}$ we get that $M$ is a submanifold of $N$. Now we are going to prove that $N$ is a totally geodesic submanifold by a similar argument as in the proof of Theorem 8.3.1 in  \cite[page 231]{BCO}.  It will suffice to prove that the parallel transport in $\mathbb{S}$ along any curve in $N$ preserves the tangent bundle $TN$.
To do this we will fix a point $p\in M$ and we will show the following two properties:
\begin{itemize}
\item[i)] \label{1} for any point $q$ in $N$, there is a curve $\gamma$ joining $p$ with $q$ such that the parallel transport along $\gamma$ in $\mathbb{S}$ of $T_pN$ is $T_{\gamma(t)}N$;
\item[ii)] the tangent space $T_pN$ is preserved by parallel transport in $\mathbb{S}$ along any loop in $N$  based at $p$.
\end{itemize}

In order to prove i), we will start showing that $TN$ is parallel with respect to the connection $\overline{\nabla}$ of $\mathbb{S}$ in directions tangent to $M$.

Since 	$TN_{|M}=TM\oplus \mathrm{V}$, a section $X$ of $TN_{|M}$ splits as  $X=X_1+X_2$,  with $X_1\in TM$ and $X_2\in \mathrm{V}$. So if $v\in T_pM$, then $$\overline{\nabla}_v X=\nabla_v X_1+\alpha(v,X_1)+\nabla^{\bot}_{v}X_2-A_{X_2}v.$$
This shows that $\overline{\nabla}_v X$ belongs to $TN$
since $\alpha(v,X_1)\in \mathrm{N}^{1}\subset \mathrm{V}$ and $\mathrm{V}$ is parallel with respect to the normal connection of $M$.

The second step is to prove that $TN$ moves parallel along any normal geodesic $\gamma(t)=\exp_p(t\xi_p)$ for $p\in M$ and $\xi_p\in \mathrm{V}_{0}$. 	
 Observe that the tangent spaces to $N$ along $\gamma$ are generated by the Jacobi fields $J(t)$ along $\gamma(t)$ with initial conditions $J(0)\in T_{\gamma(0)}M$ and $J'(0)\in \mathrm{V}$.

Denote by $\mathrm{W}_t$ the parallel transport of $T_p N$ along $\gamma$ from $\gamma(0)$ to $\gamma(t)$.

Let $J(t)$ be any Jacobi vector field along $\gamma$ with $J(0)\in T_{\gamma(0)}M$ and $J'(0)\in \mathrm{V}_{\gamma(0)}$. Since $TM\oplus \mathrm{V}$ is invariant under the curvature tensor of the symmetric space $\mathbb{S}$  one gets that $J(t)\in \mathrm{W}_t$ for every $t$. Indeed,  $\mathrm{W}_t$ is curvature invariant and so the Jacobi equation can be solved in $\mathrm{W}_t$. This shows that $T_{\gamma(t)}N\subset \mathrm{W}_t$, hence $\mathrm{W}_t=T_{\gamma(t)}N$, since both are linear spaces of the same dimension.

Now, if $q$ is any point in $N$, there exists a point $q_0$ in $M$ and a normal vector $\xi_{q_0}\in \mathrm{V}_0$ such that $q=\exp^{\bot}(\xi_{q_0})$. From the above discussion, any curve in $M$ connecting $p$ to $q_0$ followed by a normal geodesic from $q_0$ in the direction of $\xi_0$ gives one curve joining $p$ with $q$ satisfying i).

\vspace{0.5cm}
Now we prove ii) by using Lemma \ref{lemacarlos} in appendix (cf. \cite[Lemma 8.3.2, page 232]{BCO}.  Let $c(s)$ be any loop in $N$ based at $p\in M$. There exists a loop $\hat{c}(s)$ in $M$ based at $p$ and a normal vector field $\xi(s)\in \mathrm{V}_{\hat{c}(s)}$ along $\hat{c}$ such that $c(s)=\exp^{\bot}(\xi(s))$.

For each $s\in I$, define the transformation $\tau(s)\in SO(T_p \mathbb{S})$ obtained by $\overline{\nabla}$-parallel transport along the curve $c$ from $p=c(0)$ to $c(s)$, then along the normal geodesic $\gamma_{s}(t)=\exp^{\bot}(t\xi(s))$ backwards from $\gamma_s(1)$ to $\gamma_s(0)$ and finally backwards along $\hat{c}$, from $\hat{c}(s)$ to $\hat{c}(0)=p$.

Observe that $\tau(0)$ is the identity transformation of $T_p \mathbb{S}$ and $\tau(1)$ is the $\overline{\nabla}$-parallel transport along the loop $c$ followed by the $\overline{\nabla}$-parallel transport along the loop $\hat{c}^{-1}$.

Consider now the function $f:I\times I\to N$ defined by
$$f(s,t)=\left\{\begin{array}{cl}
\hat{c}(2st) & \text{ if }0\leq t\leq \frac{1}{2},\ s\in I\\
\exp^{\bot}((2t-1)\xi(s)) & \text{ if }\frac{1}{2}\leq t\leq 1,\ s\in I
\end{array}\right.$$
Observe that $f(s,0)=p$ for all $s\in I$ and the transformation $\tau(s)$ defined above is the $\overline{\nabla}$-parallel transport along the curve $t\mapsto f(0,t)$ from $t=0$ to $t=1$, then along the curve $s\mapsto f(s,1)$ from $0$ to $s$, and finally along the curve $t\mapsto f(s,t)$, backwards from $t=1$ to $t=0$.

For each $s\in I$ set $A(s)=\tau'(s)\circ\tau(s)\in\mathfrak{so}(T_p \mathbb{S})$.
We can  apply Lemma \ref{lemacarlos} to obtain that for each $u,\; v\in T_p\mathbb{S}$,
\begin{equation}
\meti A(s)u,v\metd=\int_0^{1}\meti \overline{R}\left(\frac{\partial f}{\partial s}(s,t), \frac{\partial f}{\partial t}(s,t)\right) U_s(t), W_s(t)\metd\; dt
\label{lemacartan}
\end{equation}
where $U_s(t)$ and $W_s(t)$ are $\overline{\nabla}$-parallel vector fields along the curve $t\mapsto f(s,t)$ with $U_s(0)=u$ and $W_s(0)=w$.

Observe that for each fixed $s$, the curve $t\mapsto f(s,t)$ is the concatenation of a curve in $M$ and a normal geodesic. We have seen that for each $s$, the tangent space $T_{f(s,t)}N$ is invariant under $\overline{\nabla}$-parallel transport along these curves and, since $\mathbb{S}$ is symmetric, equation (\ref{lemacartan}) implies that $$\meti A(s)u,v\metd=0$$ for each $u\in T_p N$, $w\in \nu_p N$. That is, $A(s)(T_p N)\subset T_p N$ for all $s\in I$. Since $\tau(s)$ is defined by the system of differential equations $\tau'(s)=A(s)\tau(s)$ and $\tau(0)$ is the identity transformation, one gets that $\tau(s)$ preserves $T_p N$. In particular, $\tau(1)$ preserves $T_p N$, but by construction $\tau(1)$ is the $\overline{\nabla}$-parallel transport along the concatenation of the loops $c$ and $\hat{c}^{-1}$. Since by i) the $\overline{\nabla}$-parallel transport along $\hat{c}$ preserves $T_p N$, we obtain ii). \hfill $\square$

\begin{remark}
The same proof shows that Theorem \ref{teorema} is still true if the ambient space $\mathbb{S}$ is locally symmetric, as long as the submanifold $M$ is embedded.  However if $\mathbb{S}$ is locally symmetric and $M$ is not embedded, the theorem is not true as we will show in the example below.
\end{remark}

\section{Further Remarks}\label{FR}
As we noticed in the Introduction, Theorem \ref{teorema} does not hold under the weaker assumption of the ambient space  $\mathbb{S}$ being locally symmetric even if $\mathbb{S}$ is compact.
For example let $\mathbb{S} = S^1 \times \Sigma $ be the product of a circle $S^1$ with a compact Riemann surfaces $\Sigma$ of genus 2 endowed with the metric of constant negative curvature. It is well-known that there is a self-intersecting geodesic $\gamma$ in $\Sigma$. Then the product $N := I \times \gamma$ is a subset of $\mathbb{S}$ but it is not a submanifold. However $N$ can be regarded as the image of a totally geodesic (non injective!!) immersion. So we can regard $N$ as in Figure 1.
\begin{figure}[h!]
\centering
\vspace{-0.2cm}
\includegraphics[draft=false,width=9cm,height=8cm]{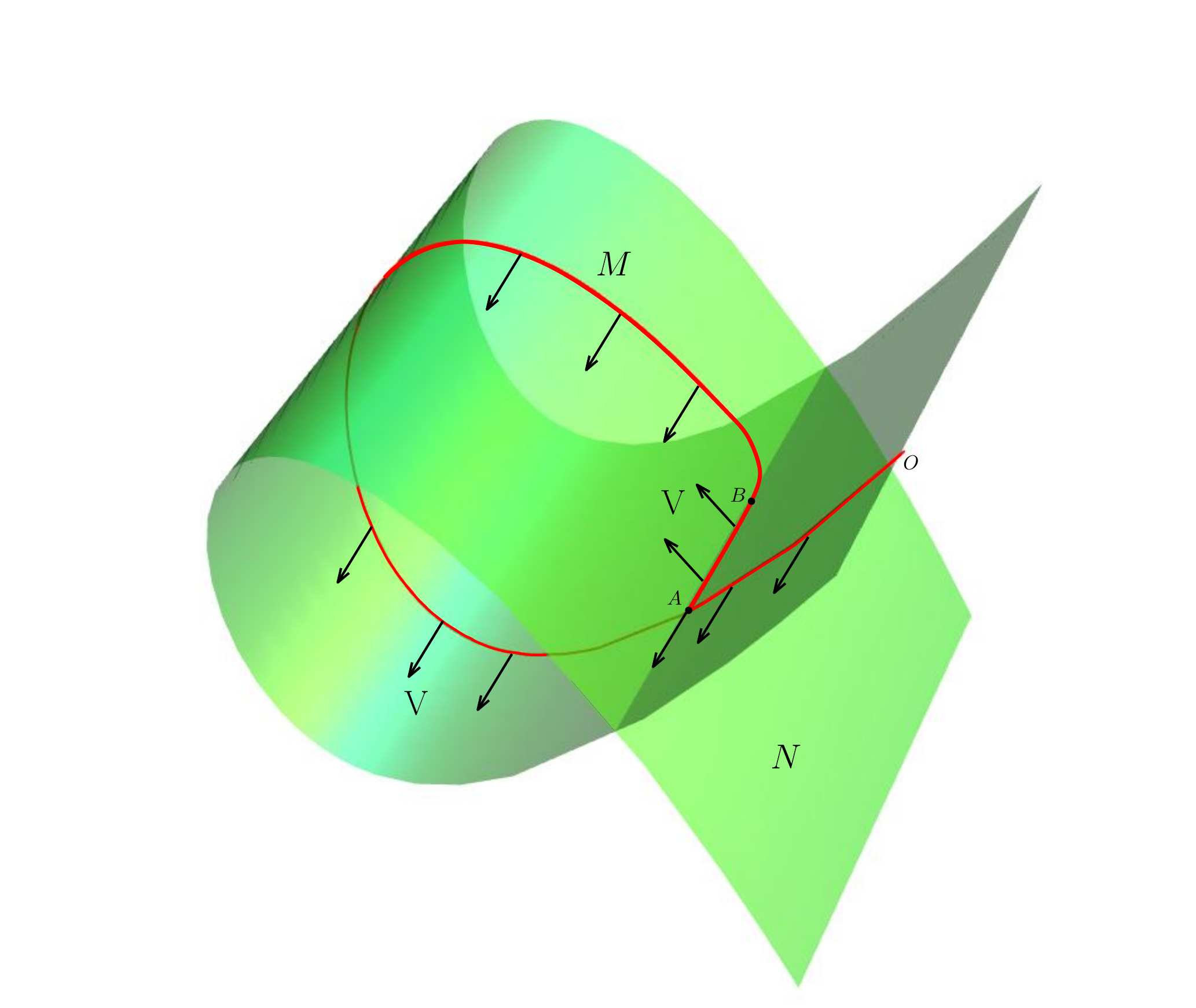}
\caption{}
\end{figure}

Consider the $1-1$ immersed curve $M$ starting at $O$ going through the point $A$ coming back to $B$ and finally approaching the point $A$ along the self-intersection of $N$. The vector field $\mathrm{V}$ is clearly continuous hence it generates a subbundle $\mathrm{V}$ of $\nu(M)$. Since $N$ is the image of a totally geodesic immersion it follows that the subbundle $\mathrm{V}$ is $\nabla^{\perp}$-parallel and contains the first normal space of $M$ as in Theorem \ref{teorema}. Now it is clear that there is not a totally geodesic submanifold containing $M$ as in Theorem \ref{teorema}.
Indeed, such totally geodesic submanifold should be contained in $N$ and self-intersect near the point $A$.
This is so because near $A$, coming from the point $O$, it should contain an open subset of the surface $N$ tangent to $\mathrm{V}$.
But when $M$ approaches $A$ coming from $B$ such totally geodesic submanifold should be contained in the leaf of $N$ tangent to $\mathrm{V}$ which is transversal to the first one. This shows that such `totally geodesic submanifold' intersects itself which is a contradiction since by submanifolds we intend 1-1 immersions.\\

We want also to remark the importance of the hypothesis of $TM\oplus \mathrm{V}$ being curvature invariant.
By using the existence theorem for curves by means of its Frenet-Serret curvatures, see for example \cite[Page 2158, Lemma 4]{Gu},
let $\gamma$ be a regular curve in $\mathbb{C}P^2$ (the complex projective space of dimension 2) with $\kappa_1  = \kappa_2 = 1$ and $\kappa_3 \equiv 0$. Then the mean curvature vector field $H$ of $\gamma$ and its normal derivative $\nabla^{\perp}_{\gamma'(t)} H$ are linearly independent. Since $\kappa_3 \equiv 0$ the rank 2 vector subbundle $\mathrm{V} = \mathrm{span} \{ H , \nabla^{\perp}_{\gamma'(t)} H \} $, which contains the first normal space $\mathrm{N}^1 = \mathrm{span} \{ H \}, $ is $\nabla^{\perp}$-parallel. Since $\mathbb{C}P^2$ has no 3-dimensional totally geodesic submanifolds we conclude that hypothesis of $T \gamma \oplus \mathrm{V}$ being curvature invariant can not be removed from Theorem \ref{teorema}.

\section*{Appendix}

We present here a slight variation of Lemma 8.3.2 in \cite[page 232]{BCO} with a sketch of its proof.

Let $\overline{M}$ be a differentiable manifold and $f:[0,1]\times[0,1]\to \overline{M}$ a continuous map. We say that $f$ is piecewise-smooth if there exist points  $0=t_0<t_1<\cdots<t_n=1$ such that $f_{|[0,1]\times(t_i,t_{i+1})}$ is smooth for $i=0,\cdots, n-1$.

\begin{lema}
Let $\overline{M}$ be a Riemannian manifold and $p\in \overline{M}$. Let $f:[0,1]\times [0,1]\to \overline{M}$ be a piecewise-smooth map with $f(s,0)=p$ for all $s\in [0,1]$. For each $s\in [0,1]$, we define $f_s:[0,1]\to \overline{M}$, $t\mapsto f(s,t)$ and for each $t\in [0,1]$ we define $f^t:[0,1]\to \overline{M}$, $s\mapsto f(s,t)$. For each $s\in [0,1]$, denote by $\tau(s)\in SO(T_p \overline{M})$the orthogonal transformation of $T_p\overline{M}$ obtained by parallel translation along $f_0$ from $p=f_0(0)$ to $f_0(1)=f^1(0)$, then along $f^1$ from $f^1(0)$ to $f^1(s)=f_s(1)$ and finally along $f_s$ from $f_s(1)$ to $f_s(0)=p$. Let $A(s)\in \mathfrak{so}(T_p\overline{M})$ be the skew-symmetric transformation of $T_p\overline{M}$ defined by $A(s)=\tau'(s)\circ \tau(s)^{-1}$ for all $s\in [0,1]$. Then for each $u,w\in T_p\overline{M}$, \begin{equation}
\meti A(s)u,w\metd=\int_0^1\meti \overline{R}\left(\frac{\partial f}{\partial s}(s,t),\frac{\partial f}{\partial t}(s,t)\right) U_s(t), W_s(t)\metd\; dt,
\label{formula}
\end{equation}
where $U_s(t)$ and $W_s(t)$ are the parallel vector fields along $f_s$ with $U_s(0)=u$ and $W_s(0)=w$ respectively.
\label{lemacarlos}
\end{lema}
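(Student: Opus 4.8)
The plan is to reduce the statement to the classical computation of how the holonomy of a ``curvilinear rectangle'' changes when one of its sides is swept across a region; I would carry this computation out on each smooth strip and then reassemble the pieces across the corners in the $t$--variable.

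First I would fix notation for parallel transport. For each $s$ let $\Pi_s(t)\colon T_p\overline{M}\to T_{f_s(t)}\overline{M}$ denote $\overline{\nabla}$--parallel transport along the (piecewise--smooth) curve $f_s$ from $f_s(0)=p$ to $f_s(t)$, and let $Q(s)\colon T_{f_0(1)}\overline{M}\to T_{f_s(1)}\overline{M}$ denote parallel transport along the top edge $f^1$. By the very definition of $\tau(s)$ one has $\tau(s)=\Pi_s(1)^{-1}\circ Q(s)\circ\Pi_0(1)$; since on each smooth strip parallel transport depends smoothly on $s$ and concatenation preserves this dependence, $\tau(s)$ is a smooth curve in $SO(T_p\overline{M})$, so that $A(s)=\tau'(s)\circ\tau(s)^{-1}$ is well defined. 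Fixing $u\in T_p\overline{M}$, I would introduce the vector field $\beta(s,t):=\Pi_s(t)\,\tau(s)u$ along $f$. By construction $\beta(s,0)=\tau(s)u$, and since $\Pi_s(1)\tau(s)=Q(s)\Pi_0(1)$, the terminal value $\beta(s,1)=Q(s)\Pi_0(1)u$ is precisely the parallel field along $f^1$ with initial value $\Pi_0(1)u$; hence $\tfrac{D}{\partial s}\beta(s,1)=0$ along $f^1$, while $\tfrac{D}{\partial t}\beta=0$ on every smooth strip because $\beta$ is parallel along each $f_s$.

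On each open strip $[0,1]\times(t_i,t_{i+1})$, where $f$ is smooth, I would apply the standard commutation identity $\tfrac{D}{\partial s}\tfrac{D}{\partial t}\beta-\tfrac{D}{\partial t}\tfrac{D}{\partial s}\beta=\overline{R}\!\left(\tfrac{\partial f}{\partial s},\tfrac{\partial f}{\partial t}\right)\beta$. As $\tfrac{D}{\partial t}\beta=0$, the field $\eta:=\tfrac{D}{\partial s}\beta$ solves the linear equation $\tfrac{D}{\partial t}\eta=-\overline{R}\!\left(\tfrac{\partial f}{\partial s},\tfrac{\partial f}{\partial t}\right)\beta$. Rewriting this as $\tfrac{d}{dt}\big(\Pi_s(t)^{-1}\eta(s,t)\big)=-\Pi_s(t)^{-1}\overline{R}(\partial_s f,\partial_t f)\beta$, integrating from $t=1$ down to $t=0$, and using the terminal condition $\eta(s,1)=0$ together with $\Pi_s(0)=\mathrm{id}$, I obtain $\eta(s,0)=\int_0^1\Pi_s(t)^{-1}\overline{R}(\partial_s f,\partial_t f)\beta(s,t)\,dt$. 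Finally, because the curve $s\mapsto f(s,0)$ is constant, the covariant $s$--derivative along it is an ordinary derivative, so $\eta(s,0)=\tfrac{d}{ds}\big(\tau(s)u\big)=\tau'(s)u=A(s)\tau(s)u$; letting $\tau(s)u$ range over $T_p\overline{M}$ (equivalently, replacing $u$ by $\tau(s)^{-1}u$) turns $\beta(s,t)$ into the parallel field $U_s(t)=\Pi_s(t)u$, and pairing with $w$ while transferring the orthogonal map $\Pi_s(t)^{-1}$ onto $W_s(t)=\Pi_s(t)w$ yields exactly \eqref{formula}.

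The one point that demands care---the only genuine difference from \cite[Lemma 8.3.2]{BCO}---is the passage across the breakpoints $t_1,\dots,t_{n-1}$, where the curves $f_s$ acquire corners. Here parallel transport is still defined by concatenating the transports over the smooth pieces and is continuous in $t$, and since the covariant $s$--derivative at a corner involves only the restriction of $\beta$ to the line $t=t_i$, the field $\eta$ is continuous across the corners; consequently $t\mapsto\Pi_s(t)^{-1}\eta(s,t)$ is continuous on $[0,1]$ and continuously differentiable on each $(t_i,t_{i+1})$ with integrable derivative, so the fundamental theorem of calculus applies on all of $[0,1]$, the interior boundary contributions telescoping by this continuity. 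This is precisely what legitimizes the integral in \eqref{formula} interpreted as a sum over the smooth strips, and it is the step I expect to require the most attention in a fully rigorous write--up.
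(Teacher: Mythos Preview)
Your argument is correct and follows essentially the same route as the paper's: your field $\beta(s,t)=\Pi_s(t)\tau(s)u$ is exactly the paper's $U(s,t)$, your $\eta=\tfrac{D}{\partial s}\beta$ is its $Z$, and both proofs use the commutation identity together with the terminal condition $\eta(s,1)=0$ and a piecewise application of the fundamental theorem of calculus across the breakpoints $t_i$. The only cosmetic difference is that the paper first reduces to $s=0$ (citing \cite{BCO}) and pairs with $W_0$ before integrating, whereas you carry the computation for general $s$ and perform the substitution $u\mapsto\tau(s)^{-1}u$ at the end.
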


\begin{proof}
Using the same argument as in the proof of Lemma 8.3.2 in \cite{BCO} one can see that it suffices to prove formula (\ref{formula}) only for $s=0$.
Let $U(s,t)$ be the vector field along $f(s,t)$ obtained by parallel translation of $u$ along $f_0$ from  $p=f_0(0)$ to $f_0(1)=f^1(0)$, then along $f^1$ from $f^1(0)$ to $f^1(s)=f_s(1)$ and finally along $f_s$ from $f_s(1)$ to $f_s(t)$. Then $$U(s,0)=\tau(s)u,\qquad A(0)u=\tau'(0)u=Z(0),$$
where $Z$ is the vector field along $f_0$ defined by $Z(t)=\left(\frac{D}{\partial s} U\right) (0,t)$.  Since the vector field $t\mapsto U(s,t)$ is parallel along $f_s$, $\frac{D}{\partial t} U(s,t)=0$ for $t\in (t_i,t_{i+1})\ i=0,\cdots, n-1$ and so $$Z'(t)=\left(\frac{D}{\partial t}\frac{D}{\partial s} U\right) (0,t)=\overline{R}\left( \frac{\partial f}{\partial t}(0,t),\frac{\partial f}{\partial s} (0,t)\right) U_0(t),  \ t\in I-\{t_i\}_{i=1}^{n-1}.$$
Consider the piecewise smooth function $$g(t)=\meti Z(t), W_0(t)  \metd.$$
For each $t\in I-\{t_i\}_{i=1}^{n-1}$,  \[ g'(t)=\meti Z'(t), W_0(t)\metd =\meti \overline{R}\left( \frac{\partial f}{\partial t}(0,t),\frac{\partial f}{\partial s} (0,t)\right) U_0(t), W_0(t)\metd . \]
Since $g(t)$ is continuous on $[t_i, t_{i+1}]$, $i=0,\cdots, n-1$, we can repeatedly apply Barrow's law and get
\begin{eqnarray*}
\meti A(0)u,w\metd&=&g(0)\\
&=&g(t_1)-\int_0^{t_1}\meti \overline{R}\left( \frac{\partial f}{\partial t}(0,t),\frac{\partial f}{\partial s} (0,t)\right) U_0(t), W_0(t)\metd dt\\
&=& g(t_2)-\sum_{i=0,1}\int_{t_i}^{t_{i+1}}\meti \overline{R}\left( \frac{\partial f}{\partial t}(0,t),\frac{\partial f}{\partial s} (0,t)\right) U_0(t), W_0(t)\metd dt\\
&\vdots&\\
&=& g(1)-\sum_{i=0}^{n-1}\int_{t_i}^{t_{i+1}}\meti \overline{R}\left( \frac{\partial f}{\partial t}(0,t),\frac{\partial f}{\partial s} (0,t)\right) U_0(t), W_0(t)\metd dt\\
&=&\int_0^1 \meti \overline{R}\left( \frac{\partial f}{\partial s}(0,t),\frac{\partial f}{\partial t} (0,t)\right) U_0(t), W_0(t)\metd dt
\end{eqnarray*}
since $Z(1)=0$ by construction and so $g(1)=\meti Z(1),W_0(1)\metd=0$.
\end{proof}

\noindent {\bf Authors' Addresses:}

\vspace{.5cm}

\noindent A. J. Di Scala, \\ Dipartimento di Scienze Matematiche, Politecnico di Torino, \\
Corso Duca degli Abruzzi 24, 10129 Torino, Italy \\
\href{mailto:antonio.discala@polito.it}{antonio.discala@polito.it}\\
\url{http://calvino.polito.it/~adiscala/}

\vspace{.5cm}

\noindent F. Vittone, \\ Depto. de Matem\'atica, ECEN, FCEIA, \\
Universidad Nacional de Rosario, Rosario, Argentina\\
CONICET\\
\href{mailto:vittone@fceia.unr.edu.ar}{vittone@fceia.unr.edu.ar}\\

\end{document}